\documentclass{singlecol-new}

\usepackage{natbib,stfloats}
\usepackage{mathrsfs}
\usepackage{graphicx}

\theoremstyle{TH}{
\newtheorem{lemma}{Lemma}
\newtheorem{theorem}[lemma]{Theorem}

}

\theoremstyle{THrm}{

}

\theoremstyle{THhit}{

}

\makeatletter


\def\BottomCatch{%
\vskip -10pt
\thispagestyle{empty}%
\begin{table}[b]%
\NINE\begin{tabular*}{\textwidth}{@{\extracolsep{\fill}}lcr@{}}%
\\[-12pt]
\end{tabular*}%
\vskip -30pt%
\end{table}%
} \makeatother

\begin{document}%

\setcounter{page}{1}

\LRH{A. Bencheikh et~al.}

\RRH{Operational matrix for fractional Emden-Fowler problem. }

\VOL{x}

\ISSUE{x}

\PUBYEAR{xxxx}

\BottomCatch

\CLline


\subtitle{}

\title{\sf{\textbf{A new operational matrix based on Boubaker polynomials for solving fractional Emden-Fowler problem}}}

\authorA{\sf{Abdelkrim Bencheikh, Lakhdar Chiter *}}

\affA{Department of Mathematics, Ouargla University,   Algeria, \\
Department of Mathematics, Setif University, Algeria,\\
Fundamental and Numerical Mathematics Laboratory
\\
E-mail: krimbench@yahoo.fr\\
E-mail: benchiekh.abdkrim@univ-ouargla.dz \\
E-mail: lakhdarchiter61@gmail.com\\
E-mail: lchiter@univ-setif.dz\\
{\sf{*}}Corresponding author}

\begin{abstract}
In this paper the singular Emden-Fowler equation of fractional order is
introduced and a computational method is proposed for its numerical solution. 
For the approximation of the solutions we have used Boubaker polynomials and defined the formulation for its fractional derivative operational matrix. 
This tool was not used yet, however, this area has not found many practical applications yet, and here introduced for the first time. 
The operational matrix of the Caputo fractional derivative tool converts these problems to a system of algebraic equations whose solutions are simple and easy to compute. 
Numerical examples are examined to prove the validity and the effectiveness of the proposed method to find approximate and precise solutions. 
\end{abstract}

\KEYWORD{Boubaker Polynomials; Operational matrix of fractional derivatives;
Collocation method; Fractional Emden-Fowler Type Equations.}

\REF{to this paper should be made as follows: Bencheikh, A., Chiter, L. (xxxx) ` A new operational matrix based on Boubaker polynomials for solving fractional Emden-Fowler problem 
pp.xxx--xxx.}

\begin{bio}
Bencheikh Abdelkrim received his Phd in Applied Mathematics from the University of Setif, Setif, Algeria. Currently, he is working as an Associate Professor in the department of Mathematics, Ouargla University, Algeria. 
His research interests are numerical methods of integral and integro-differential equations and related fields.\vs{8}
\noindent 
Chiter Lakhdar is a professor at the department of Mathematics at the University of Setif, Algeria. His research interests are optimization and numerical methods.\vs{8}

\end{bio}

\maketitle

\section{Introduction}

In mathematical physics and nonlinear mechanics there exists sufficiently large number of particular
basic singular fractional differential equations for which an exact analytic solution in terms of
known functions did not exist ( \cite{1, 2, 3, 4, 5} ). One of these equations describing  many phenomena in mathematical physics and astrophysics such as,  the thermal behaviour of a spherical,
cloud of gas, isothermal gas sphere and theory of stellar structure, theory of thermionic currents among many others, is called the singular Emden-Fowler equations of fractional order formulated as: \cite{S1,S2,Hu,Jo}
\begin{equation}\label{0.1}
D^{2\alpha }u(x)+\frac{\lambda}{x^{\alpha }}D^{\alpha }u(x)+s(x)g(u( x))
=h(x),
x\in(0,1),
\lambda>0,\frac{1}{2}<\alpha\leq 1 
\end{equation}
subject to the conditions:
\begin{equation*}\label{0.2}
u\left( 0\right) =a,\quad D^{\alpha}u\left( 0\right) =b,
\end{equation*}
where $ a $ and $ b $ are constants. When $ \alpha = 1, \lambda = 2 $, and $ h(x) = 1 $, Eq.\eqref{0.1} becomes the Lane-Emden type equation.
$ D^{\alpha } $ denote the Caputo fractional derivatives. It is generally defined as follows :
\begin{equation}\label{0.3}
D^{\alpha }u\left( x\right) =\frac{1}{\Gamma \left( n-\alpha \right) }%
\int_{0}^{x}\frac{u^{\left( n\right) }\left( t\right) }{\left( x-t\right)
^{\alpha -n+1}}dt,\quad  n-1<\alpha <n,n\in 
\mathbb{N},\alpha > 0
\end{equation}
For the Caputo derivative we have $ D^{\alpha }C =0, $
where $ C $ is a constant, and
\begin{eqnarray*}\label{0.4}
D^{\alpha }x^{\beta } &=&\left\{ 
\begin{array}{c}
0, \\ 
\frac{\Gamma \left( \beta +1\right) }{\Gamma \left( \beta +1-\alpha \right) }%
x^{\beta -\alpha }%
\end{array}%
\begin{array}{c} 
\text{for }  \beta \in 
\mathbb{N}
\cup \left\{ 0\right\}   \text{and }  \beta <\left\lceil \alpha \right\rceil \\ 
\text{for } \beta \in 
\mathbb{N}
\cup \left\{ 0\right\}  \text{ and } \beta \geq \left\lceil \alpha
\right\rceil  \text{ or } \beta \notin 
\mathbb{N}
_{\text{ }}\text{and }\beta >\left\lceil \alpha \right\rceil%
\end{array}%
\right.
\end{eqnarray*}
where, $ \lceil \alpha \rceil $ denotes the integer part of $ \alpha $, that is the largest integer less or equal than 
$ \alpha $, or
the smallest integer greater than or equal to $ \alpha $.
\\
The problem \eqref{0.1}  was studied by using the Residual Power Series Method by \cite{S1},
 Homotopy analysis method (HAM) by \cite{Hu},
 Reproducing kernel Hilbert space method by \cite{S2}, The fractional differential transformation (FDT) \cite{Jo}, Polynomial Least Squares Method by \cite{Bo}, Shifted Legendre Operational Matrix by \cite{Lege}, Chebyshev wavelets by \cite{Kin}, Orthonormal Bernoulli's polynomials by \cite{Be},  Orthonormal Bernstein polynomials by \cite{AB}. For the solution of the classic Emden-Fowler equations (Case $ \alpha =1 $), there are many Studies
of analytical as well as numerical methods is provided in
monographs by \cite{cla1,cla2,cla3,cla4,cla5,
cla6,cla7}.\\
The purpose of this paper is to use Boubaker operational matrix of fractional order for solving a singular initial value problems of fractional Emden-Fowler type equations \eqref{0.1}. To the best of our knowledge this is the first time that the Boubaker operational matrices are used to obtain solutions of singular Emden-Fowler equations of fractional order. First we present a new theorem which can reduce the fractional Emden-Fowler problem to a system of algebraic equations.
 The Boubaker polynomials  were established for the first time by Boubaker ( $2007$ ), to solve heat
equation inside a physical model. The first monomial definition of the Boubaker polynomials on
interval $ x \in [0,1] $, was introduced by \cite{Bou1,Bou2,Bou3,Bou4,Bou5,
Bou7}:
\begin{equation}\label{0.5} 
\mathbf{B}_{0}(x)=1,\quad  
 \mathbf{B}_{n}(x)=\sum_{p=0}^{\xi(n)}%
[\frac{(n-4p)}{(n-p)}C_{n-p}^{p}] (-1)^{p}x^{n-2p},\quad n\geq 1,
\end{equation}%
where
$ \xi(n)=\lfloor{{\frac{i}{2}}}\rfloor = \frac{2n+((-1)^{n}-1)}{4}$ and $ C_{n-r}^{r}=\frac{(n-p)!}{r!(n- 2p)!} $.
The symbol $\lfloor .\rfloor $  denotes the floor function.
The Boubaker polynomials could be calculated by following recursive formula:
\begin{equation}\label{0.6}
\mathbf{B}_{m}(x)=x\mathbf{B}_{m-1}(x)-\mathbf{B}_{m-2}(x),\quad m\geq 2. 
\end{equation}
We will construct  operational matrix of Caputo fractional derivatives 
$\mathbf{D}^{(\alpha) }$ 
for the Boubaker polynomials which are given by
\begin{equation}\label{0.7}
  D^{\alpha}\textbf{B}(x)\simeq \mathbf{D}^{(\alpha)}\textbf{B}(x),
\end{equation}
where 
$ \textbf{B}(x)=[B_{0}(x) ,B_{1}(x),\ldots,B_{N}(x)]^{T}$ be Boubaker vector
and the matrice $ \mathbf{D}^{\left( \alpha \right)} $ are of order $ (N+1)\times (N+1)$.
In order to show the high performance of Boubaker operational matrix of fractional derivative, we apply it to solve 
equation \eqref{0.1}.
\\
The paper is organized as follows. In Section \eqref{2}, we express Boubaker polynomials in terms of Taylor basis, and function approximation. In Section \eqref{3} The operational matrix of Caputo fractional derivatives is constructed.
In Section\eqref{4}, we use Boubaker polynomials method for solving fractional Emden-Fowler type equations. Section\eqref{5} illustrates some numerical examples to show the accuracy of this method. Finally, Section \eqref{6}
concludes the paper.

\section{Boubaker's matrix and approximation of function}\label{2}
By using the expression \eqref{0.5} and taking $ n=0,...,N $, we can express
Boubaker polynomials in terms of Taylor basis  ( \cite{Bou4,Bou5,Bou7} )
\begin{equation}\label{1.1}
\mathbf{B}\left( x\right) =\mathbf{M}\textbf{T}\left( x\right) ,\quad x\in \left[
0,1\right] ,
\end{equation}%
where 
\begin{equation}\label{1.2}\textbf{T}(x)=[1,x,\ldots ,x^{N}]^{T},
\end{equation}%
and if $ N $ is odd,
$$
\textbf{M}=\left[ 
\begin{array}{ccccccc}
m_{0,0} & 0 & 0 & 0 & \cdots & 0 & 0 \\ 
0 & m_{1,0} & 0 & 0 & \cdots & 0 & 0 \\ 
m_{2,1} & 0 & m_{2,0} & 0 & \cdots & 0 & 0 \\ 
\vdots & \vdots & \vdots & \vdots & \ddots & \vdots & \vdots \\ 
m_{N-1,\frac{N-1}{2}} & 0 & m_{N-1,\frac{N-3}{2}} & 0 & \cdots & m_{N-1,0} & 
0 \\ 
0 & m_{n,\frac{N-1}{2}} & 0 & m_{N,\frac{N-3}{2}} & \cdots & 0 & m_{N,0}%
\end{array}%
\right] 
$$
if $ N $ is even,
$$
\textbf{M}=\left[ 
\begin{array}{ccccccc}
m_{0,0} & 0 & 0 & 0 & \cdots & 0 & 0 \\ 
0 & m_{1,0} & 0 & 0 & \cdots & 0 & 0 \\ 
m_{2,1} & 0 & m_{2,0} & 0 & \cdots & 0 & 0 \\ 
\vdots & \vdots & \vdots & \vdots & \ddots & \vdots & \vdots \\ 
0 & m_{N-1,\frac{N-2}{2}} & 0 & m_{N-1,\frac{N-4}{2}} & \cdots & m_{N-1,0} & 
0 \\ 
m_{N,\frac{N}{2}} & 0 & m_{N,\frac{N-2}{2}} & 0 & \cdots & 0 & m_{N,0}%
\end{array}%
\right] 
$$
where
\begin{equation}\label{1.3}  
 \mathbf{B}_{n}(x)=\sum_{p=0}^{\xi(n)}%
m_{n,p}x^{n-2p},\quad n = 0, 1,..., N, p = 0, 1,...,\lfloor\frac{n}{2}\rfloor,
\end{equation}%
\begin{equation}\label{1.4}  
m_{n,p}=[\frac{(n-4p)}{(n-p)}C_{n-p}^{p}] (-1)^{p}.
\end{equation}%
As can be observed, $\textbf{M} $ denotes a lower triangular matrix and  $ \vert\textbf{M}\vert =$\ $\Pi _{i=0}^{N}m_{i},_{0}=1 $\, thus it is
non-singular and  $ \textbf{M}^{-1} $ exists.
\\

We consider the set of Boubaker polynomials of $  N^{th} $ degree as 
\begin{equation}\label{1.5}
\mathbf{B}\left( x\right) =\left[ B_{0}\left( x\right) ,B_{1}\left( x\right)
,...,B_{N}\left( x\right) \right] ^{T}\subset L^{2}\left[ 0,1\right],
\end{equation}
and assume that $ \mathbf{S}_{N}=span\left\{ B_{0}\left( x\right)
,B_{1}\left( x\right) ,...,B_{N}\left( x\right) \right\} $. Because $\mathbf{%
S}_{N} $ is a finite dimensional vector space, if $ u $ is an arbitrary
function in $ L^{2}\left[ 0,1\right] $, then $ u $ has the best approximation out
of  $ \textbf{S}_{N} $ such as  $ u_{N} \in \mathbf{S}_{N} $, that is \cite{Bou6}\newline
\begin{equation}\label{1.6}
\forall y\in \mathbf{S}_{N},\left\Vert u-u_{N}\right\Vert _{2}\leq
\left\Vert u-y\right\Vert _{2}.
\end{equation}
Since  $ u_{N}\in \mathbf{S}_{N} $, there exist unique coefficients  $ c_{i}, i =
0, 1, . . ., N $  such that 
\begin{equation}\label{1.7}
u\left( x\right) \simeq u_{N}\left( x\right) =\sum_{i=0}^{n}c_{i}B_{n}\left(
x\right) =\textbf{C}^{T}.\mathbf{B}\left( x\right),
\end{equation}
where  $ \textbf{C} $ is an  $ (N+ 1)\times(1) $ vector given by $ \textbf{C} = [c_{0}, c_{1},
. . ., c_{N}]^{T} $,  $\mathbf{B(x)} $ is the vector function defined in Eq.\eqref{1.5}, and coefficients vector $ C $  can be computed by  $ \textbf{C}^{T}<\mathbf{B(x)}, 
\mathbf{B(x)}> = < u(x), \mathbf{B(x)}> $, such that 
\begin{equation}\label{1.8}
< u(x), \mathbf{B(x)}> =\int_{0}^{1}u\left( x\right) \textbf{B}^{T}(x) dx,
\end{equation}
and $ <.,.> $ denotes the standard inner product on  $ L^{2}\left[ 0,1\right] $.
Thus, by definition $ \textbf{Q} = <\textbf{B}(x), \textbf{B}(x)> $, we get 
\begin{equation}\label{1.9}
\textbf{C}^{T}=(\int_{0}^{1}u\left( x\right) \textbf{B}^{T}(x) dx)\textbf{Q}^{-1},
\end{equation}
where $ \textbf{Q} $ is the following  $ (N+1)\times (N+1) $ matrix :%
\begin{eqnarray*}
\textbf{Q} =\langle \textbf{B}(x),\textbf{B}(x)\rangle &=&\int_{0}^{1}\textbf{B}(x)\textbf{B}^{T}(x)dx
 = \textbf{M}\left(
\int_{0}^{1}\textbf{T}(x)\textbf{T}^{T}(x)dx\right) \textbf{M}^{T}\\
&=& \textbf{M}\textbf{H}\textbf{M}^{T}.
\end{eqnarray*}
Where; $ \textbf{H}=[h_{ij}]_{(N+1)\times (N+1)} $ is the well-known Hilbert matrix,
the components of which can be computed as follows:%
\begin{equation}\label{1.10}
h_{ij}=\frac{1}{i+j+1},i,j=1,...,N.
\end{equation}
\begin{theorem} \label{result1}
( \cite{Bou6,Bou7} ) Elements $ B_{0},B_{1},...,B_{N} $ of a Hilbert
space  $ L^{2}[0,1] $ constitute a linearly independent set in  $ \ L^{2}[0,1] $
if and only if
\begin{equation*}
\textbf{G}(B_{0},B_{1},...,B_{N})=\left\vert 
\begin{array}{cccc}
\left\langle B_{0},B_{0}\right\rangle  & \left\langle
B_{0},B_{1}\right\rangle  & \cdots  & \left\langle B_{0},B_{N}\right\rangle 
\\ 
\left\langle B_{1},B_{0}\right\rangle  & \left\langle
B_{1},B_{1}\right\rangle  & \cdots  & \left\langle B_{1},B_{N}\right\rangle 
\\ 
\vdots  & \vdots  & \ddots  & \vdots  \\ 
\left\langle B_{N},B_{0}\right\rangle  & \left\langle
B_{N},B_{1}\right\rangle  & \cdots  & \left\langle B_{N},B_{N}\right\rangle 
\end{array}%
\right\vert \neq 0
\end{equation*}%
produces that $ \textbf{Q} $ is symmetric and also non-singular, so $ \textbf{Q}^{-1} $ exists.
\end{theorem}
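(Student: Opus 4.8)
The plan is to establish the biconditional by the classical Gram-determinant argument and then read off the properties of $\mathbf{Q}$ as corollaries, since $\mathbf{Q}=\langle\mathbf{B}(x),\mathbf{B}(x)\rangle$ is precisely the Gram matrix $\mathbf{G}(B_0,\dots,B_N)$ whose entries are $\langle B_i,B_j\rangle$.

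For the forward implication I would argue the contrapositive. Suppose the family is linearly dependent, so $\sum_{j=0}^{N}a_jB_j=0$ for scalars $a_j$ not all zero. Taking the inner product with each $B_i$ and using linearity of $\langle\cdot,\cdot\rangle$ gives $\sum_{j=0}^{N}a_j\langle B_i,B_j\rangle=0$ for every $i$; hence the nonzero vector $(a_0,\dots,a_N)^{T}$ is annihilated by the Gram matrix, so its determinant $\mathbf{G}(B_0,\dots,B_N)$ vanishes. For the converse, assume $\mathbf{G}(B_0,\dots,B_N)=0$. Then the Gram matrix is singular, so there is a nonzero vector $(a_0,\dots,a_N)^{T}$ with $\sum_{i=0}^{N}a_i\langle B_i,B_j\rangle=0$ for all $j$. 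Put $v=\sum_{i=0}^{N}a_iB_i\in L^{2}[0,1]$; forming the combination $\sum_j a_j(\cdot)$ of these relations yields $\langle v,v\rangle=\sum_{i,j}a_ia_j\langle B_i,B_j\rangle=0$, and positive definiteness of the $L^{2}$ inner product forces $v=0$. Thus $\sum_i a_iB_i=0$ nontrivially and the $B_i$ are linearly dependent. This last step is the only place where the Hilbert-space (inner-product) structure is genuinely used, rather than mere bilinearity of a pairing, so I would regard it as the crux of the argument, even though it is short.

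Finally, to obtain the stated consequences for $\mathbf{Q}$: symmetry is immediate from $Q_{ij}=\langle B_i,B_j\rangle=\langle B_j,B_i\rangle=Q_{ji}$. For nonsingularity, recall from the paragraph preceding the theorem that $\mathbf{B}(x)=\mathbf{M}\,\mathbf{T}(x)$ with $\mathbf{M}$ lower triangular and $\vert\mathbf{M}\vert=\prod_{i=0}^{N}m_{i,0}=1\neq 0$; since $\{1,x,\dots,x^{N}\}$ is linearly independent in $L^{2}[0,1]$ and $\mathbf{M}$ is invertible, the $B_i$ are linearly independent, whence by the biconditional just proved $\det\mathbf{Q}=\mathbf{G}(B_0,\dots,B_N)\neq 0$. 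Equivalently, one may invoke $\mathbf{Q}=\mathbf{M}\mathbf{H}\mathbf{M}^{T}$ with $\mathbf{H}$ the Hilbert matrix, which is symmetric positive definite, so that $\mathbf{Q}$ is congruent to a positive definite matrix and hence invertible. Either way $\mathbf{Q}^{-1}$ exists, which is exactly what is needed for formula \eqref{1.9} to be well defined.
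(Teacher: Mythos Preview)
Your argument is correct and is precisely the classical Gram-determinant proof one finds in standard functional analysis references. Note, however, that the paper does not supply its own proof of this theorem: it merely states the result with citations to \cite{Bou6,Bou7} (Kreyszig's textbook and a Boubaker-polynomials paper) and moves on. So there is nothing in the paper to compare your proof against beyond the bare statement; your write-up effectively fills in what the paper outsources to the literature, and it does so by exactly the route those references take. The additional remark that $\mathbf{Q}=\mathbf{M}\mathbf{H}\mathbf{M}^{T}$ with $\mathbf{H}$ the (positive definite) Hilbert matrix is also consistent with the discussion immediately preceding the theorem and gives a clean alternative to invoking the biconditional for the nonsingularity of~$\mathbf{Q}$.
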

\begin{lemma}(\cite{Bou6,Bou5}) Suppose that  $  u\in C^{N+1}[0,1] $  and  $\mathbf{S}_{N}=span\left\{ B_{N}\left(
x\right) ,B_{n}\left( x\right) ,...,B_{N}\left( x\right) \right\} $
Let $ u_{0} $ be the best approximation for $ u $ out of $ \mathbf{S}_{N} $ then%
\begin{equation}
\left\vert \left\vert u(x)-u_{0}(x)\right\vert \right\vert_{L^{2}[0,1]} \leq \frac{%
Max_{x\in \lbrack 0,1]}|u^{(N+1)}(x)|}{\left( N+1\right) \sqrt{2N+3}}
\end{equation}
\end{lemma}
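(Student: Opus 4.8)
The plan is to exploit the best-approximation property together with the classical Taylor remainder estimate, so that the whole statement reduces to computing one elementary integral. First I would observe that, by the triangular structure of the matrix $\textbf{M}$ in \eqref{1.1} (each $\textbf{B}_n$ has exact degree $n$ with leading coefficient $m_{n,0}=1$) together with Theorem \ref{result1}, the family $\{B_0,\dots,B_N\}$ is a basis of the space $\mathbb{P}_N$ of polynomials of degree at most $N$; hence $\mathbb{P}_N=\mathbf{S}_N$. In particular, the $N$-th order Taylor polynomial of $u$ about $0$,
\[
p_N(x)=\sum_{k=0}^{N}\frac{u^{(k)}(0)}{k!}\,x^{k},
\]
belongs to $\mathbf{S}_N$ and is therefore an admissible competitor for the best approximation.

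Next, since $u_0$ is the best $L^{2}[0,1]$-approximation to $u$ out of $\mathbf{S}_N$, the defining minimality property (cf.\ \eqref{1.6}) applied with $y=p_N$ gives $\|u-u_0\|_{L^{2}[0,1]}\le\|u-p_N\|_{L^{2}[0,1]}$. Now I invoke Taylor's theorem with the Lagrange form of the remainder: for $u\in C^{N+1}[0,1]$ and each $x\in[0,1]$ there is $\xi_x\in(0,x)$ with $u(x)-p_N(x)=\dfrac{u^{(N+1)}(\xi_x)}{(N+1)!}\,x^{N+1}$. Writing $M:=\max_{x\in[0,1]}|u^{(N+1)}(x)|$ and integrating,
\[
\|u-p_N\|_{L^{2}[0,1]}^{2}=\int_{0}^{1}\Bigl(\tfrac{u^{(N+1)}(\xi_x)}{(N+1)!}\Bigr)^{2}x^{2N+2}\,dx\le\frac{M^{2}}{\bigl((N+1)!\bigr)^{2}}\int_{0}^{1}x^{2N+2}\,dx=\frac{M^{2}}{\bigl((N+1)!\bigr)^{2}(2N+3)}.
\]
Taking square roots and chaining the two inequalities yields $\|u-u_0\|_{L^{2}[0,1]}\le M\big/\bigl((N+1)!\sqrt{2N+3}\bigr)$, and since $(N+1)!\ge N+1$ this is a fortiori bounded by $M\big/\bigl((N+1)\sqrt{2N+3}\bigr)$, the asserted estimate.

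There is no deep obstacle here: the only step requiring genuine care is the first one, namely verifying that the Boubaker system spans all of $\mathbb{P}_N$, so that $p_N$ is legitimately in $\mathbf{S}_N$ and may be used in \eqref{1.6}; everything afterwards is the standard Taylor/Lagrange bound plus $\int_0^1 x^{2N+2}\,dx=1/(2N+3)$. I would also note in passing that the estimate produced by this argument naturally carries $(N+1)!$ rather than $(N+1)$ in the denominator, and that one may expand $u$ about any fixed point of $[0,1]$ instead of $0$ without affecting the conclusion.
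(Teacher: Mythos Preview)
Your argument is correct. The paper itself does not supply a proof of this lemma: its entire proof reads ``For proof, see \cite{Bou6,Bou5}.'' The approach you give---identifying $\mathbf{S}_N$ with $\mathbb{P}_N$ via the invertibility of $\mathbf{M}$, using the degree-$N$ Taylor polynomial as a competitor in the best-approximation inequality \eqref{1.6}, bounding the Lagrange remainder pointwise, and then computing $\int_0^1 x^{2N+2}\,dx=1/(2N+3)$---is precisely the standard argument that appears in the cited sources (in particular \cite{Bou5}), so there is nothing to contrast.

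Your closing remark is also apt: the natural output of this argument carries $(N+1)!$ in the denominator, and the stated bound with $(N+1)$ is almost certainly a transcription slip in the paper (the same sharper form with $(N+1)!$ appears in \cite{Bou5}). Your weakening $(N+1)!\ge N+1$ recovers the statement as written, so nothing is lost.
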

\begin{proof}.
 For proof, see \cite{Bou6,Bou5}.
 \end{proof}
\begin{theorem}
(\cite{Bou6,Bou5})
 Suppose that $ u\in L_{2}[0,1]$ and $ u(x) $ is approximated by 
 $ \sum_{i=0}^{N}c_{i}B_{i}(x) $, then we have 
\begin{equation}\label{1.12}
\lim_{N \rightarrow \infty }\left\vert \left\vert u\left( x\right)
-\sum_{i=0}^{N}c_{i}B_{i}(x)\right\vert \right\vert _{L_{2}[0,1]}=0
\end{equation}
\end{theorem}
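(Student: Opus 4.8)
The plan is to deduce the $L_2$-convergence statement directly from the preceding lemma, which already provides a pointwise/uniform-type error bound for the best approximation $u_0$ out of $\mathbf{S}_N$. First I would record that, by the definition of the best approximation in Eq.~\eqref{1.6}, for every $N$ the partial sum $\sum_{i=0}^{N}c_i B_i(x)$ that appears in \eqref{1.7} is precisely $u_N = u_0$, the element of $\mathbf{S}_N$ minimizing the $L_2$ distance to $u$; in particular
\begin{equation*}
\left\Vert u - \sum_{i=0}^{N}c_i B_i \right\Vert_{L_2[0,1]} \;=\; \left\Vert u - u_0 \right\Vert_{L_2[0,1]}.
\end{equation*}
Then I would invoke the Lemma to bound this by $\dfrac{\max_{x\in[0,1]}|u^{(N+1)}(x)|}{(N+1)\sqrt{2N+3}}$ and argue that the right-hand side tends to $0$ as $N\to\infty$, whence \eqref{1.12} follows by the squeeze theorem since the norm is nonnegative.

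The key steps in order are: (i) identify the truncated series with the best approximation $u_0$; (ii) apply the Lemma's estimate; (iii) take the limit $N\to\infty$ on the upper bound and conclude. Step (iii) is where the only real subtlety lies, and it is also the main obstacle: the Lemma is stated under the hypothesis $u\in C^{N+1}[0,1]$, whereas the Theorem only assumes $u\in L_2[0,1]$. To bridge this gap I would first prove the claim for $u\in C^\infty[0,1]$ (or at least sufficiently smooth $u$), where the factor $1/\big((N+1)\sqrt{2N+3}\big)\to 0$ drives the bound to zero provided the growth of $\max_{[0,1]}|u^{(N+1)}|$ is controlled; for the general $u\in L_2[0,1]$ I would then use a density argument — given $\varepsilon>0$, pick a smooth $v$ with $\|u-v\|_{L_2}<\varepsilon/2$, note that the best approximation operator onto $\mathbf{S}_N$ is a contraction (orthogonal projection) so $\|u-u_N\|_{L_2}\le \|u-v\|_{L_2}+\|v-v_N\|_{L_2}$, and make the second term $<\varepsilon/2$ for $N$ large by the smooth case.

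The hard part will be making the smooth case rigorous if one insists on the Lemma as literally stated, because $\max_{[0,1]}|u^{(N+1)}|$ need not be bounded uniformly in $N$; one typically needs either an analyticity assumption on $u$ or a more careful Jackson-type estimate. In keeping with the level of the paper, however, I expect the intended argument is simply to quote the Lemma, observe that $(N+1)\sqrt{2N+3}\to\infty$, and conclude \eqref{1.12} — i.e. the completeness of the Boubaker system in $L_2[0,1]$ — perhaps with a brief remark that the general $L_2$ case follows by density of smooth functions together with the contractivity of the $L_2$-projection onto $\mathbf{S}_N$. I would present it in that concise form, flagging the density step explicitly so that the passage from $C^{N+1}$ to $L_2$ is not glossed over.
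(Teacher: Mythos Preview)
The paper does not actually prove this theorem: it is stated with citations to \cite{Bou6,Bou5} and no proof environment follows. So there is nothing to compare your argument against on the paper's side; what remains is to assess whether your proposed route stands on its own.

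Your density argument at the end is the right mechanism, but the way you chain it to the preceding Lemma leaves a genuine gap. You plan to first handle $u\in C^\infty[0,1]$ by invoking the Lemma's bound
\[
\|u-u_0\|_{L_2}\le \frac{\max_{[0,1]}|u^{(N+1)}|}{(N+1)\sqrt{2N+3}}
\]
and letting $N\to\infty$. As you yourself flag, this need not tend to $0$: for $u(x)=1/(2-x)$ one has $\max_{[0,1]}|u^{(N+1)}|=(N+1)!$, and $(N+1)!/\big((N+1)\sqrt{2N+3}\big)\to\infty$. So the Lemma, taken literally, does not establish the smooth case, and without the smooth case your density step has nothing to feed on. The fix is not to sharpen the Lemma but to bypass it entirely: since the matrix $\mathbf{M}$ in \eqref{1.1} is nonsingular, $\mathbf{S}_N=\mathrm{span}\{B_0,\dots,B_N\}=\mathrm{span}\{1,x,\dots,x^N\}$ is exactly the space of polynomials of degree $\le N$. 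Density of polynomials in $L_2[0,1]$ (Weierstrass plus density of $C[0,1]$ in $L_2[0,1]$) then gives, for any $\varepsilon>0$, a polynomial $p$ with $\|u-p\|_{L_2}<\varepsilon$; for $N\ge\deg p$ one has $p\in\mathbf{S}_N$, hence $\|u-u_N\|_{L_2}\le\|u-p\|_{L_2}<\varepsilon$ by best approximation. This delivers \eqref{1.12} for all $u\in L_2[0,1]$ in one stroke, with no smoothness detour and no appeal to the Lemma.
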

\section{ The Boubaker  operational matrix of fractional derivative}\label{3}
The main objective of this section is to derive the operational matrix of Caputo fractional derivatives
based on the Boubaker polynomials
\\

For a vector $\mathbf{B(x)}$, we can approximate the operational matrices of
fractional order integration as\cite{Bou5}:
\begin{equation}\label{3.1}
D^{\alpha }\mathbf{B}\left( x\right) \simeq \mathbf{D}^{\left( \alpha
\right) }\mathbf{B}\left( x\right) 
\end{equation}
where $ \mathbf{D}^{\left( \alpha \right) } $ is the $ (N+1)\times (N+1) $ 
Caputo fractional operational matrix of integration for Boubaker polynomials. We compute $ \mathbf{D}^{\left( \alpha \right) } $ as follows:
\begin{eqnarray}\label{3.2}
D^{\alpha }\mathbf{B}\left( x\right)  &\simeq &\textbf{M}D^{\alpha }\textbf{T}\left(
x\right)  
=\textbf{M}\textbf{Z}\bar{\textbf{X}}\left( x\right) 
\end{eqnarray}
where the matrix $\textbf{Z}_{(N+1)\times (N+1)}$ is given by
\begin{equation}\label{3.3}
\textbf{Z}=\left( Z_{i,j}\right) =\left\{ 
\begin{array}{c}
\frac{\Gamma \left( j+1\right) }{\Gamma \left( j+1-\alpha \right) }, \\ 
0,%
\end{array}%
\begin{array}{c}
i=j=\lceil \alpha \rceil ,...,N \\ 
\text{otherwise,}%
\end{array}%
\right. ,
\end{equation}
and $\bar{\textbf{X}}=\left[ \bar{X}_{i+1}\right] _{(N+1)\times (1)}$, with
\begin{equation}\label{3.4}
\bar{X}_{i+1}=\left\{ 
\begin{array}{c}
x^{i-\alpha }, \\ 
0,%
\end{array}%
\begin{array}{c}
i=\lceil \alpha \rceil ,...,N \\ 
i=0,1,...,\lceil \alpha \rceil -1%
\end{array}%
\right. 
\end{equation}
 Now, $\bar{X}$ is expanded in terms of Boubaker polynomials as%
\begin{equation}\label{3.5}
\bar{X}=\textbf{E}^{T}\mathbf{B(x)}
\end{equation}%
where $ \textbf{E}=\left[ e_{0},e_{1},...,e_{m}\right] $ and $\ e_{i}=\textbf{Q}^{-1}\hat{E}_{i}
$ $\hat{E}_{i}=[\hat{e}_{i,0},\hat{e}_{i,1},...,\hat{e}_{i,m}]^{T}.$The
entries of the vector $\hat{E}_{i}$ can be calculated as
\begin{equation}\label{3.6}
\hat{e}_{i,j}=(\int_{0}^{1}x^{i-\alpha }B_{j}(x)dx)\textbf{Q}^{-1}
\end{equation}%
then we have
\begin{equation}\label{3.7}
D^{\alpha }\mathbf{B}\left( x\right) \simeq \mathbf{D}^{\left( \alpha
\right) }\mathbf{B}\left( x\right) 
\begin{array}{cc}
, & \mathbf{D}^{\left( \alpha \right) }=\textbf{M}\textbf{Z}\textbf{E}^{T}%
\end{array}%
\end{equation}
$\mathbf{D}^{\left( \alpha \right) }$is the operational matrix of the Caputo fractional derivative.
\\

\section{Solution of singular Fractional Emden-Fowler problem }\label{4}
his section presents the derivation of the method for solving a singular initial value
problems of fractional Emden Fowler
type equations. \\

Let us consider the fractional Emden-Fowler
equation of the form
\begin{equation}\label{4.1}
D^{2\alpha }u(x)+\frac{\lambda}{x^{\alpha }}D^{\alpha }u(x)+s(x)g(u( x))
=h(x),\quad
x\in(0,1),\quad
\lambda>0,\quad \frac{1}{2}<\alpha \leq 1 
\end{equation}
with initial conditions: 
\begin{equation}\label{4.01}
u\left( 0\right) =a,\quad D^{\alpha}u\left( 0\right) =b
\end{equation}
Approximating $u(x)$, $s\left( x\right) g\left( u\left( x\right) \right) $
by the Boubaker polynomials  as
\begin{eqnarray}\label{4.2}
&&u\left( x\right) =\sum_{i=0}^{m}c_{i}B_{i}\left( x\right) =\textbf{C}^{T}\mathbf{B}%
\left( x\right),
\quad  s\left( x\right) g\left( u\left( x\right) \right) =s\left( x\right)
g\left( \textbf{C}^{T}\mathbf{B}\left( x\right) \right) 
\end{eqnarray}%
where the unknowns are, $\textbf{ C}=\left[ c_{0},c_{1},...,c_{m}\right] ^{T} $.
Using operational matrix of fractional derivative, Eq. \eqref{3.7} can be written as
\begin{equation}\label{4.2}
\textbf{C}^{T}\mathbf{D}^{\left( 2\alpha \right) }\mathbf{B}\left( x\right) +\frac{%
\lambda }{x^{\alpha }}\textbf{C}^{T}\mathbf{D}^{\left( \alpha \right) }\mathbf{B}%
\left( x\right) +s\left( x\right) g\left( \textbf{C}^{T}\mathbf{B}\left( x\right)
\right) =h\left( x\right) 
\end{equation}
Collocating Eq.\eqref{4.2} at $m-1$ collocation points
leads to
\begin{equation}\label{4.3}
\textbf{C}^{T}\mathbf{D}^{\left( 2\alpha \right) }\mathbf{B}\left( x_{i}\right) +%
\frac{\lambda }{x^{\alpha }}\textbf{C}^{T}\mathbf{D}^{\left( \alpha \right) }\mathbf{B%
}\left( x_{i}\right) +s\left( x_{i}\right) g\left( \textbf{C}^{T}\mathbf{B}\left(
x_{i}\right) \right) =h\left( x_{i}\right).
\end{equation}
A set of suitable collocation points is defined as follows:
\begin{equation}\label{4.4}
x_{i}=\frac{1}{2}\left( \cos \left( \frac{i\pi }{n}\right) +1\right)
,i=0,...,m-1
\end{equation}
In addition, the initial conditions \eqref{4.01} provide two algebraic equations as%
\begin{equation}\label{4.5}
\textbf{C}^{T}\mathbf{B}\left( 0\right) =a,\quad \textbf{C}^{T}\mathbf{D}^{\left( \alpha
\right) }\mathbf{B}\left( 0\right) =0
\end{equation}
Finally, we can compute the values for the components
of $ \textbf{C} $ by solving the system of eq. \eqref{4.3} and \eqref{4.5}.
Hence, the approximate solution for $ u(x)$ can be computed by using Eq.
\eqref{1.7}. 

\section{ Numerical examples}\label{5}
In this section, we applied the method presented in Section \eqref{4} to solve fractional Emden-Fowler Equation.We have done all
the numerical computations with a computer program \textbf{Matlab}
\\
 
\textbf{Example 1.}
We consider the following fractional Emden-Fowler equations :
\begin{equation}\label{5.1}
D^{2\alpha }u(x)+\frac{2}{x^{\alpha }}D^{\alpha }u(x)+u(x)^{n}
=0,\quad
\end{equation}
subject to the conditions:
$  u(0)=1 ,\quad D^{\alpha }u(0)=0 $
\begin{enumerate}
\item
For $ \alpha=1 $  and $ n = 0 $, Eq. \eqref{5.1} has as The exact solution for this problem is 
 $$ u(x)=1-\frac{1}{3!}x^{2} $$
By applying this method, and taking $ m=2 $, we find
\begin{eqnarray*}
\textbf{D}^{\left( 1\right) }=\left[ 
\begin{array}{ccc}
0 & 0 & 0 \\ 
1 & 0 & 0 \\ 
0 & 2 & 0%
\end{array}%
\right]  ,\allowbreak \textbf{D}^{\left( 2\right) }=\left[ 
\begin{array}{ccc}
0 & 0 & 0 \\ 
0 & 0 & 0 \\ 
2 & 0 & 0%
\end{array}%
\right] ,\allowbreak
\textbf{C}=\left[ 
\begin{array}{c}
c_{0} \\ 
c_{1} \\ 
c_{2}
\end{array}%
\right]
=\left[ 
 \begin{array}{c}
\frac{4}{3} \\ 
0 \\ 
-\frac{1}{6}
\end{array}%
\right]
\end{eqnarray*}
Hence, the solution is
\begin{eqnarray*}
u(x) =\textbf{C}^{T}\mathbf{B}\left( x\right) 
&=&\left[ \frac{4}{3},0,-\frac{1}{6}\right] \left[
\begin{array}{c}
1 \\ 
x \\ 
x^{2}+2%
\end{array}%
\right] =1-\frac{1}{3!}x^{2}
\end{eqnarray*}
which is the exact solution.
\item
For $ \alpha=1 $  and $ n = 1 $, Eq. \eqref{5.1} has as the exact solution\cite{cla7}  
$$ u\left( t\right) =\frac{sinx}{x} $$
We solved the above problem, by applying the technique described in Section \eqref{4} with $ m = 3 $, we approximate solution as\\
$$ u(x)=c_{0}B_{0}(x)+c_{1}B_{1}(x)+c_{2}B_{2}(x)+c_{3}B_{3}(x)=\textbf{C}^{T}\mathbf{B}(x) $$
Here, we have
\begin{equation*}
\mathbf{D}^{\left( 1\right) }\allowbreak =\left[ 
\begin{array}{cccc}
0 & 0 & 0 & 0 \\ 
1 & 0 & 0 & 0 \\ 
0 & 2 & 0 & 0 \\ 
-5 & 0 & 3 & 0%
\end{array}%
\right] ,\ \mathbf{D}^{\left( 2\right) }\allowbreak =\left[ 
\begin{array}{cccc}
0 & 0 & 0 & 0 \\ 
0 & 0 & 0 & 0 \\ 
2 & 0 & 0 & 0 \\ 
0 & 6 & 0 & 0%
\end{array}%
\right],\allowbreak
\textbf{C}=\left[ 
\begin{array}{c}
c_{0} \\ 
c_{1} \\ 
c_{2}\\
c_{3}
\end{array}%
\right] 
\end{equation*}
we find the following system
\begin{equation*}  
c_{0}+2c_{2}=1 ,\quad c_{1}+c_{3}=0 
\end{equation*} 
\begin{equation*}
c_{0}+\frac{41}{12}c_{1}+\frac{137}{16}c_{2}+\frac{2465}{192}c_{3}=0 ,\quad
c_{0}+\frac{33}{4}c_{1}+\frac{129}{16}c_{2}+\frac{721}{64}c_{3}=0%
\end{equation*}
which has the solution:
\begin{equation*}
\lbrace{ c_{0}=\frac{25\,673}{19\,113},\quad c_{1}=-\frac{256}{19\,113}%
,\quad c_{2}=-\frac{3280}{19\,113}  ,\quad c_{3}=\frac{256}{19\,113}} 
\end{equation*}
So, in this case the approximate of $ u_{3}$ (x) is
\begin{eqnarray*}
u_{3}\left( x\right)  &=&
\left[ 
\begin{array}{cccc}
\frac{25\,673}{19\,113} & -\frac{256}{19\,113} & -\frac{3280}{19\,113} & 
\frac{256}{19\,113}%
\end{array}%
\right] \left[ 
\begin{array}{c}
1 \\ 
x \\ 
x^{2}+2 \\ 
x^{3}+x%
\end{array}%
\right]\\
&=&\frac{256}{19\,113}x^{3}-\frac{3280}{19\,113}x^{2}+1
\end{eqnarray*}

Table.\eqref{Table 1} shows the absolute errors between the approximate solutions obtained for values of $ m=3 $, and $ m=6 $ using the operational matrix of Boubaker polynomials and the exact solutions.
It can be seen from table.\eqref{Table 1} that the solutions obtained by the present method is nearly identical with the exact solutions. Clearly, increasing more higher the values of $m$ leads to highly accurate results.
\begin{table}[h!]
\caption{ Absolute error for different values of $m$ for   $ \alpha = 1$ }\label{Table 1}
\begin{center}
\begin{tabular}{|c|ccccc|}
\hline
$x$ & $0.1$ & $0.3$ & $0.5$ & $0.7$ & $0.9$ \\ \hline
$m=3$ & $3.6881$E-$5$ & $1.5070$E-$4$ & $7.9560$E$-5$ & $1.9380$E$-4$ & $%
3.9614$E$-4$ \\ \hline
$m=6$ & $3.\,\allowbreak 389\,1$E-$8$ & $3.\,\allowbreak 051\,2$E-$7$ & $%
8.\,\allowbreak 479\,1$E-$8$ & $1.\,\allowbreak 671\,4$E-$7$ & $%
2.\,\allowbreak 961\,1\times $E-$7$ \\ \hline
\end{tabular}
	\end{center}
\end{table}
\end{enumerate}

\textbf{Example 2.} We consider the following fractional Emden-Fowler equations (\cite{S1}):
\begin{equation*}
D^{2\alpha }u\left( x\right) +\frac{1}{x^{\alpha }}D^{\alpha }u\left(
x\right) +\left( 1+x^{\alpha }\right) \left( u\left( x\right) \right)
=h\left( x\right) 
\end{equation*}
subject to the conditions:
 $$ u\left( 0\right) =3,\quad D^{\alpha }u\left( 0\right) =0 $$
where 
 \begin{equation*}
h\left( x\right) =\Gamma \left( 1+2\alpha \right) +\frac{\Gamma \left(
1+2\alpha \right) }{\Gamma \left( 1+\alpha \right) }+\left( 1+x^{\alpha
}\right) \left( 3+x^{2\alpha }\right), 
\end{equation*} 
The exact solution is
$
u\left( x\right) =3+x^{2\alpha }
$\\
Applying the method developed in Sections \eqref{3} and \eqref{4} for $ m = 2$ , $ \alpha=1 $
we have:
$$ u(x)=c_{0}B_{0}(x)+c_{1}B_{1}(x)+c_{2}B_{2}(x)=C^{T}\mathbf{B}(x) $$
Therefore using Eq. \eqref{4.3} we obtain:
$ 1.\,\allowbreak 5c_{0}+2.\,\allowbreak 75c_{1}+7.\,\allowbreak
375c_{2}=\allowbreak 8.\,\allowbreak 875\allowbreak
$\\
Now, by applying Eq.\eqref{4.3} we have:
$
c_{0}+2c_{2}=3 $ and $ c_{1}=0
$\\
Finally, we get  $ c_{0}=1,c_{1}=0 $   and $ c_{2}=1 $\\
Thus we can write
\begin{equation*}
u\left( x\right) =\left[ 
\begin{array}{ccc}
1 & 0 & 1%
\end{array}%
\right] \left[ 
\begin{array}{c}
1 \\ 
x \\ 
x^{2}+2%
\end{array}%
\right] =3+\allowbreak x^{2}
\end{equation*}
which is the exact solution.
\\
Table.\eqref{Table2} shows the absolute errors between the approximate solutions obtained for values of $ \alpha=0.7,0.8 $, and $ \alpha=1 $  and the exact solutions.
\begin{table}[h!]
\caption{ Absolute error for different values of $\alpha$ for   $ m=2 $  }\label{Table2}
\begin{center}
\begin{tabular}{|c|ccccc|}
\hline
$x$ & $0.1$ & $0.3$ & $0.5$ & $0.7$ & $0.9$ \\ \hline
$\alpha =1$ & $0.000$ & $0.000$ & $0.000$ & $0.000$ & $0.000$ \\ \hline
$\alpha =0.85$ & $\allowbreak 6.\,\allowbreak 760\,7\times 10^{-2}$ & $%
1.07\,59\times 10^{-2}$ & $\allowbreak 8.\,\allowbreak 371\,2\times 10^{-3}$
& $\allowbreak 6.\,\allowbreak 982\,7\times 10^{-3}$ & $1.18\,70\times
10^{-3}$ \\ \hline
$\alpha =0.75$ & $\allowbreak 9.\,\allowbreak 703\,2\times 10^{-2}$ & $%
1.02\,64\times 10^{-2}$ & $5.\,\allowbreak 164\,3\times 10^{-3}$ & $%
4.\,\allowbreak 989\,1\times 10^{-3}$ & $1.99\,24\times 10^{-3}$ \\ \hline
\end{tabular}
	\end{center}
\end{table}
\\
\textbf{Example 3.} We consider the following fractional Emden-Fowler equations:
\begin{equation}\label{5.00}
D^{2\alpha }u(x)+\frac{\lambda }{x^{\alpha }}D^{\alpha }u(x)+-2(2x^{2}+3)u(x)
=h(x)
\end{equation}
subject to the conditions:
$  u(0)=1 ,\quad D^{\alpha }u(0)=0 $,
with $ \alpha=1,\lambda=2 $ and $ h(x)= 0 $ .Eq \eqref{5.00} has as the exact solution\cite{cla7}
$ u\left( t\right) =exp(x^{2})$ \\

In fig\eqref{fig1}, we plotted the exact solution and the approximate
solutions of $ u(x)$ for $ m = 4 $ and $ 6 $. Definitely, by increasing
the value of $ m $, the approximate value of $ u(x)$ will close to the exact values. and fig\eqref{fig1.2} present the absolute error in this case.
\begin{figure}[htp]
\center{\caption{ Graph of exact solution and approximate solution (at $ m = 4 $ and $6 $ ) .  }\label{fig1}
 \includegraphics[width=2in]{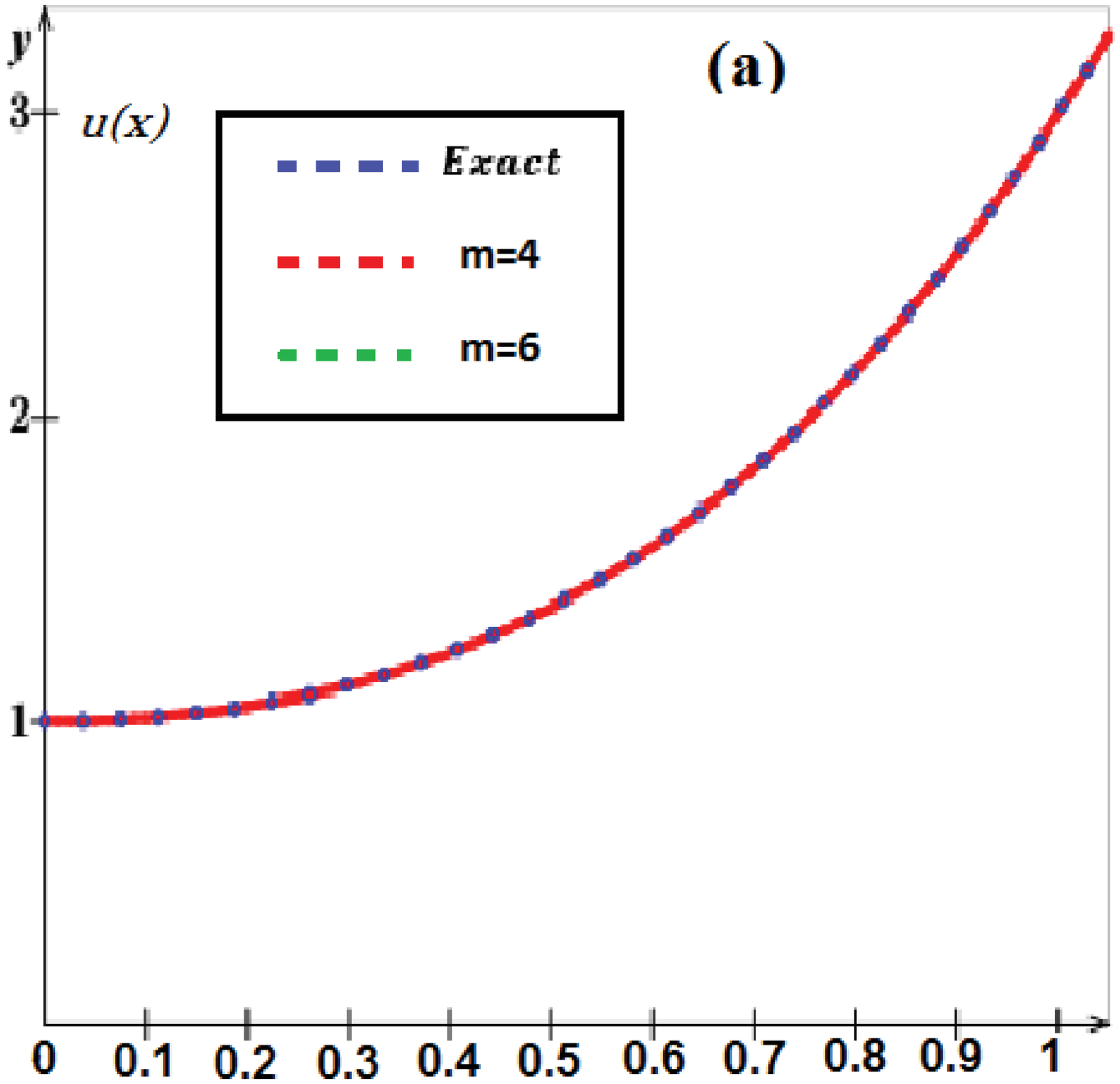}
\caption{ Graph of absolute errors.}\label{fig1.2} 
 \includegraphics[width=2in]{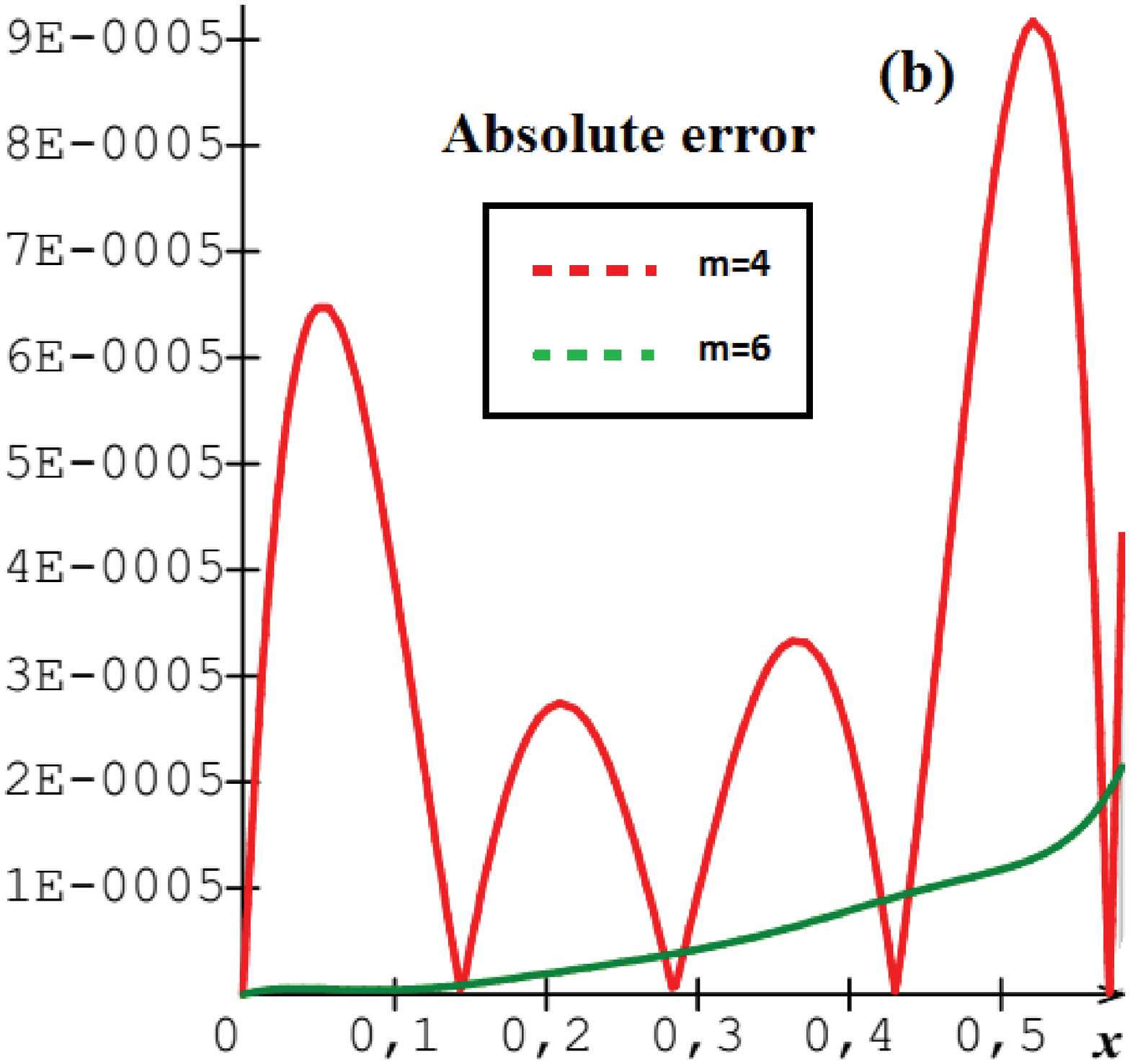}}
\end{figure}
\textbf{Example 4.}
 We consider the following fractional Emden-Fowler equations ( \cite{S1} ) :
\begin{equation*}\label{C4}
D^{2\alpha }u\left( x\right) +\frac{1}{x^{\alpha }}D^{\alpha }u\left(
x\right)- 9u\left( x\right) =h\left( x\right) ,\quad x\in (0,1),\quad \frac{1%
}{2}<\alpha \leq 1
\end{equation*}
subject to the boundary conditions
 $$ u\left( 0\right) =2,\quad D^{\alpha }u\left( 0\right) =0
 $$
where
\begin{equation*}
h\left( x\right) =-9+\frac{\Gamma \left( 1+2\alpha \right) }{\Gamma \left(
1+\alpha \right) }+\Gamma \left( 1+2\alpha \right) +\left( \frac{\Gamma
\left( 1+3\alpha \right) }{\Gamma \left( 1+\alpha \right) }+\frac{\Gamma
\left( 1+3\alpha \right) }{\Gamma \left( 1+2\alpha \right) }\right)
x^{\alpha }-9x^{2\alpha }-9x^{3\alpha }
\end{equation*}
The exact solution is
$$
u\left( x\right) =1+x^{2\alpha }+x^{3\alpha }
$$
For $ \alpha= 0.7 ,0.8, 1$ and $ m = 4 $. The operational matrix for $\mathbf{D}^{\left( \alpha\right) }$ for various  $ \alpha $
are given as 
\begin{eqnarray*}
\mathbf{D}^{\left( 0.7\right) } &=&\left[ 
\begin{array}{ccccc}
0 & 0 & 0 & 0 & 0 \\ 
7.\,\allowbreak 506\,7 & -5.\,\allowbreak 108\,4 & -7.\,\allowbreak 096\,9 & 
8.\,\allowbreak 245\,8 & -3.\,\allowbreak 488\,8 \\ 
-0.758\,54 & -2.\,\allowbreak 489 & -1.\,\allowbreak 828\,7 & 
4.\,\allowbreak 118\,2 & -2.\,\allowbreak 181\,5 \\ 
3.\,\allowbreak 851\,8 & -5.\,\allowbreak 522\,9 & -5.\,\allowbreak 082\,6 & 
8.\,\allowbreak 471\,2 & -3.\,\allowbreak 305\,7 \\ 
1.\,\allowbreak 864\,6 & -2.\,\allowbreak 331\,3 & -0.318\,72 & 
2.\,\allowbreak 375\,2 & 0.614\,68%
\end{array}%
\right]  \\
\mathbf{D}^{\left( 1.4\right) } &=&\left[ 
\begin{array}{ccccc}
0 & 0 & 0 & 0 & 0 \\ 
0 & 0 & 0 & 0 & 0 \\ 
4.\,\allowbreak 738\,6 & 7.\,\allowbreak 309\,6 & 0.564\,06 & 
-4.\,\allowbreak 312\,0 & 2.\,\allowbreak 820\,3 \\ 
-7.\,\allowbreak 585\,9 & 0.987\,11 & 3.\,\allowbreak 296\,3 & 0.211\,52 & 
-0.475\,93 \\ 
-3.\,\allowbreak 893\,4 & -10.\,\allowbreak 541 & -1.\,\allowbreak 816\,9 & 
11.\,\allowbreak 281 & -3.\,\allowbreak 742\,3%
\end{array}%
\right] 
\end{eqnarray*}
\begin{eqnarray*}
\mathbf{D}^{\left( 0.8\right) } &=&\left[ 
\begin{array}{ccccc}
0 & 0 & 0 & 0 & 0 \\ 
6.\,\allowbreak 198\,6 & -3.\,\allowbreak 393\,4 & -5.\,\allowbreak 244\,3 & 
5.\,\allowbreak 809\,3 & -2.\,\allowbreak 380\,9 \\ 
-2.\,\allowbreak 661\,6 & 3.\,\allowbreak 804\,3 & 2.\,\allowbreak 649\,3 & 
-2.\,\allowbreak 973\,3 & 1.\,\allowbreak 315\,1 \\ 
1.\,\allowbreak 755\,7 & -3.\,\allowbreak 665\,2 & -2.\,\allowbreak 863\,6 & 
5.\,\allowbreak 913\,2 & -2.\,\allowbreak 224\,9 \\ 
2.\,\allowbreak 393\,3 & -4.\,\allowbreak 664\,7 & -1.\,\allowbreak 793\,3 & 
5.\,\allowbreak 076\,8 & -0.584\,78%
\end{array}%
\right]  \\
\mathbf{D}^{\left( 1.6\right) } &=&\left[ 
\begin{array}{ccccc}
0 & 0 & 0 & 0 & 0 \\ 
0 & 0 & 0 & 0 & 0 \\ 
13.\,\allowbreak 216 & -6.\,\allowbreak 467\,1 & -11.\,\allowbreak 403 & 
12.\,\allowbreak 352 & -4.\,\allowbreak 984\,5 \\ 
-8.\,\allowbreak 582\,9 & 6.\,\allowbreak 606\,8 & 6.\,\allowbreak 319\,4 & 
-5.\,\allowbreak 044\,7 & 2.\,\allowbreak 038\,9 \\ 
-10.\,\allowbreak 082 & -5.\,\allowbreak 954\,1 & 3.\,\allowbreak 617\,9 & 
5.\,\allowbreak 905\,8 & -1.\,\allowbreak 420\,1%
\end{array}%
\right] 
\end{eqnarray*} 
\begin{eqnarray*}
\mathbf{D}^{\left( 1\right) } =\left[ 
\begin{array}{ccccc}
0 & 0 & 0 & 0 & 0 \\ 
1 & 0 & 0 & 0 & 0 \\ 
0 & 2 & 0 & 0 & 0 \\ 
-5 & 0 & 3 & 0 & 0 \\ 
0 & -4 & 0 & 4 & 0%
\end{array}%
\right],\quad  
\mathbf{D}^{\left( 2\right) } =\left[ 
\begin{array}{ccccc}
0 & 0 & 0 & 0 & 0 \\ 
0 & 0 & 0 & 0 & 0 \\ 
2 & 0 & 0 & 0 & 0 \\ 
0 & 6 & 0 & 0 & 0 \\ 
-24 & 0 & 12 & 0 & 0%
\end{array}%
\right] 
\end{eqnarray*}
Eq.\eqref{C4} with the initial condition has been solved with the proposed method and the values of the unknown matrix $ C^{T}$ are obtained and listed in Table \eqref{table2}. The values of absolute errors obtained are shown in Table \eqref{table3}.
\begin{table}[h!]
\caption{ Values of unknowns for $ m = 4 $  and for different values of $ \alpha $ }\label{table2}
\center{
\begin{tabular}{{|c|ccccc|}}
\hline
 Unknowns  & $c_{0}$ & $c_{1}$ & $c_{2}$ & $c_{3}$ & $c_{4}$ \\ \hline
$\alpha =0.7$ & $-2.\,\allowbreak 270\,6$ & $0.575\,55$ & $1.\,\allowbreak
616\,8$ & $-7.\,\allowbreak 593\,7\times 10^{-2}$ & $1.\,\allowbreak
856\,8\times 10^{-2}$ \\ \hline
$\alpha =0.8$ & $-2.\,\allowbreak 289\,5$ & $7.\,\allowbreak 948\,1\times
10^{-2}$ & $1.\,\allowbreak 635\,2$ & $0.145\,92$ & $9.\,\allowbreak
545\,0\times 10^{-3}$ \\ \hline
$\alpha =1$ & $-1.\,\allowbreak 000\,4$ & $-0.999\,65$ & $1.\,\allowbreak
000\,2$ & $0.999\,65$ & $2.\,\allowbreak 221\,6\times 10^{-5}$ \\ 
\hline
\end{tabular}}
\end{table}
\begin{table}[h!]
\caption{ Absolute errors for  $ m = 5 $ and for different values of $ \alpha $ }\label{table3}
\center{
\begin{tabular}{{|c|ccc|}}
\hline
$x$ & $\alpha =0.7$ & $\alpha =0.8$ & $\alpha =1$ \\
 \hline
$0.1$ & $\allowbreak 5.\,\allowbreak 583\,9\times 10^{-2}$ & $\allowbreak
2.\,\allowbreak 825\,1\times 10^{-2}$ & $3.\,\allowbreak 834\,8\times 10^{-5}
$ \\
 \hline
$0.2$ & $4.\,\allowbreak 923\,8\times 10^{-2}$ & $\allowbreak
2.\,\allowbreak 367\,8\times 10^{-2}$ & $\allowbreak 3.\,\allowbreak
476\,4\times 10^{-5}$ \\ \hline
$0.3$ & $\allowbreak 4.\,\allowbreak 577\,6\times 10^{-2}$ & $\allowbreak
2.\,\allowbreak 066\,5\times 10^{-2}$ & $\allowbreak 3.\,\allowbreak
1270\times 10^{-5}$ \\
 \hline
$0.4$ & $\allowbreak 4.\,\allowbreak 324\,2\times 10^{-2}$ & $\allowbreak
1.\,\allowbreak 855\,0\times 10^{-2}$ & $\allowbreak 2.\,\allowbreak
983\,1\times 10^{-5}$ \\ \hline
$0.5$ & $\allowbreak 4.\,\allowbreak 065\,4\times 10^{-2}$ & $\allowbreak
1.\,\allowbreak 719\,5\times 10^{-2}$ & $\allowbreak 3.\,\allowbreak
236\,2\times 10^{-5}$ \\ \hline
$0.6$ & $\allowbreak 3.\,\allowbreak 751\,0\times 10^{-2}$ & $\allowbreak
1.\,\allowbreak 660\,5\times 10^{-2}$ & $\allowbreak 4.\,\allowbreak
072\,1\times 10^{-5}$ \\ \hline
$0.7$ & $\allowbreak 3.\,\allowbreak 352\,8\times 10^{-2}$ & $1.6\,810\times
10^{-2}$ & $\allowbreak 5.\,\allowbreak 671\,6\times 10^{-5}$ \\
 \hline
$0.8$ & $\allowbreak 2.\,\allowbreak 853\,9\times 10^{-2}$ & $\allowbreak
1.\,\allowbreak 782\,4\times 10^{-2}$ & $\allowbreak 8.\,\allowbreak
2100\times 10^{-5}$ \\ 
\hline
$0.9$ & $\allowbreak 2.\,\allowbreak 242\,8\times 10^{-2}$ & $\allowbreak
1.\,\allowbreak 961\,9\times 10^{-2}$ & $1.\,\allowbreak 185\,7\times 10^{-4}
$ \\
 \hline
$1.0$ & $\allowbreak 1.\,\allowbreak 509\,9\times 10^{-2}$ & $\allowbreak
2.\,\allowbreak 212\,5\times 10^{-2}$ & $\allowbreak 1.\,\allowbreak
677\,8\times 10^{-4}$ \\ \hline
\end{tabular}
}
\end{table}
\begin{figure}[htp]
\center{
\caption{The graph of $u(x)$ with $ m = 4 $ and
$\alpha=  0.7, 0.8, 1 $ .}
\includegraphics[width=2in]{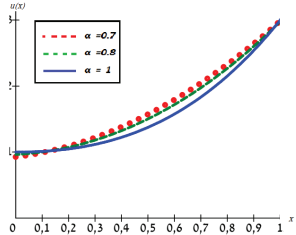}
\label{fig2}
}
\end{figure}

\section{ Conclusions}\label{6}
In this work, we get operational matrices of the fractional derivative by
Boubaker polynomials. Then by using these matrices, we reduced the singular
fractional Emden-Fowler type equations to a system of algebraic equations that can be solved easily. A Numerical example
is included to demonstrate the validity and applicability of this method, and the results reveal that the method is very effective, straightforward, simple, and it can be applied by
developing for the other related fractional problems, such as
such partial fractional differential and integro-differential
equations. This will be investigated in a future work.

\section*{Acknowledgments} We would like to thank you for \textbf{following
the instructions above} very closely in advance. It will definitely
save us lot of time and expedite the process of your paper's
publication.



\begin{thebibliography}{10}
\bibitem[\protect\citeauthoryear{Abbas et al.}{2019}]{AB} Abbas, S., Azam, Gh. and Ali, E. (2019) `Numerical study of singular fractional Lane-Emden type equations arising in astrophysics', {\it J. Astrophys. Astr}, Vol. 40, No. 27, pp.2-12.
\bibitem[\protect\citeauthoryear{Bencheikh et al.}{2017}]{cla7} Bencheikh, A.,  Chiter, L. and Abbassi, H.(2017) `Bernstein polynomials method for numerical solutions of integro-differential
form of the singular Emden-Fowler initial value problems', {\it
J. Math. Computer Sci }, Vol. 17,
No. 1, pp.66-75.
\bibitem[\protect\citeauthoryear{Boubaker K.}{2007}]{Bou1} 
Boubaker, K.(2007) `On modified Boubaker polynomials: some differential and analytical properties of the new polynomials issued
from an attempt for solving Bi-varied heat equation', {\it  Trends Appl. Sci. Res}, Vol. 2,
No. 6, pp.540.544.
\bibitem[\protect\citeauthoryear{Boubaker K.}{2008}]{Bou2}
Boubaker K.(2008) `The Boubaker polynomials, a new function class for solving Bi-varied second order differential equations', {\it F. E. J. Appl. Math}, Vol. 17,No. 3, pp.299-320.
\bibitem[\protect\citeauthoryear{Bolandtalat et al.}{2016}]{Bou4}
 Bolandtalat, A., Babolian, E.and Jafari, H.(2016)
`Numerical solutions of multi-order fractional
differential equations by Boubaker Polynomials', {\it  
 Open Phys }, Vol. 14, No. 1,pp.226-230.
\bibitem[\protect\citeauthoryear{Caruntu et al.}{2019}]{Bo}
Caruntu, B., Bota, C., Marioara, L. and Paca, M.(2019) ` Polynomial Least Squares Method for Fractional Lane-Emden Equations ', {\it Symmetry } 
, Vol. 11,
No. 4, pp.479.
\bibitem[\protect\citeauthoryear{Chandrasekhar S.}{1979}]{cla1}
Chandrasekhar, S
. (1979) `An introduction to the study of stellar structure, Dover Publications', {\it Inc, New York}.
\bibitem[\protect\citeauthoryear{Chowdhury et al.}{2009}]{cla2}
Chowdhury, M.S.H., Hashim, I.(2009)`Solutions of Emden-Fowler equations by homotopy-perturbation method', {\it Nonl0inear Anal. Real World Appl}
, Vol. 11,
No. 1, pp.104-115.
\bibitem[\protect\citeauthoryear{Das S.}{2011}]{4} 
Das, S.(2011)`Functional Fractional Calculus', {\it Springer}. 
\bibitem[\protect\citeauthoryear{Diethelm K.}{2010}]{5} 
Diethelm, K.(2010)`The Analysis of Fractional Differential Equations: An
Application-Oriented Exposition Using Differential Operators of Caputo
Type', {\it Springer }.
\bibitem[\protect\citeauthoryear{Davaeifar and Rashidinia.}{2017}]{Bou7}
Davaeifar, S., Rashidinia, J.(2017)`Boubaker polynomials collocation approach for solving systems of
nonlinear Volterra-Fredholm integral equations', {\it  Journal of Taibah University for Science };
, Vol. 11,
No. 6, pp.1182-1199.
\bibitem[\protect\citeauthoryear{Huan et al.}{2017}]{Hu} 
Huan H, Wang., Hu, Y.(2017)`Solutions of fractional Emden-Fowler equations by homotopy analysis method', {\it  Journal of Advances in Mathematics}  
, Vol. 13,
No. 1, pp.1-6.
\bibitem[\protect\citeauthoryear{Kazemi Nasab et al.}{2018}]{Kin} 
Kazemi Nasab, A., Pashazadeh Atabakan, Z., Ismail, A.I. and Rabha Ibrahim, W
.(2018)`A numerical method for solving singular fractional
Lane-Emden type equations', {\it Journal of King Saud University-Science }
, Vol. 30,
No. 1, pp.120-130.
\bibitem[\protect\citeauthoryear{Kilbas A et al.}{2006}]{2} 
Kilbas, A.A., Srivastava, H.M. and Trujillo, J.J.(2006) `Theory and Applications of
Fractional Differential Equations', {\it Elsevier, Amsterdam }.
\bibitem[\protect\citeauthoryear{Kreyszig E.}{1978}]{Bou6} Kreyszig, E.(1978) `Introductory Functional Analysis with Applications', {\it Wiley,New York}.
\bibitem[\protect\citeauthoryear{Labiadh and Boubaker.}{2007}]{Bou3} Labiadh, H., Boubaker, K.(2007)`A Sturm-Liouville shaped characteristic differential equation as a guide to establish a
quasi- polynomial expression to the Boubaker polynomials', {\it Diff. Eq. Cont. Proc }
, Vol. 2,
No. 2, pp.117-133.
\bibitem[\protect\citeauthoryear{Podlubny I.}{1999}]{1} 
Podlubny, I.(1999)`Fractional Differential Equations', {\it Academic Press. New York }.
\bibitem[\protect\citeauthoryear{Rebenda and Smarda .}{1978}]{Jo} 
Rebenda, J., Smarda, Z.(1978)`A Numerical Approach for Solving of Fractional
Emden-Fowler Type Equations', {\it AIP Conference Proceedings }, Vol. 1978,
No. 1, 140006.
\bibitem[\protect\citeauthoryear{Rabiei et al .}{2017}]{Bou5}
Rabiei, K., Ordokhani, Y. and Babolian, E.(2017)`The Boubaker polynomials and their application to solve
fractional optimal control problems', {\it Nonlinear Dyn. Nonlinear Dynamics }
, Vol. 88,
No. 2, pp.1013-1026.
\bibitem[\protect\citeauthoryear{Syam, M.}{2018}]{S1}
Syam, M.(2018)`Analytical Solution of the Fractional Initial Emden-Fowler Equation Using the Fractional Residual Power Series Method', {\it  Int. J. Appl. Comput. Math }
, Vol. 4,
No. 106, pp.2-8.
\bibitem[\protect\citeauthoryear{Syam et al.}{2018}]{S2}
Syam, M.L., Jaradat, H.,Alquran M and Al-Shara S.(2018)`An accurate method for solving a singular
second-order fractional Emden-Fowler
problem', {\it  Advances in Difference Equations }
, Vol. 2018,
No. 30, pp.2-16.
\bibitem[\protect\citeauthoryear{Shawagfeh N.T.}{2002}]{3}
Shawagfeh, N.T.(2002)`Analytical approximate solutions for nonlinear fractional
differential equations', {\it  Appl. Math. Comput } 
, Vol. 131,
No. 2-3, pp.517-529.
\bibitem[\protect\citeauthoryear{Sahu and Mallick.}{2019}]{Be} 
Sahu, P. Mallick, B.(2019)`Approximate Solution of Fractional Order Lane-Emden TypeDifferential Equation by Orthonormal Bernoulli’s Polynomials', {\it 
Int. J. Appl. Comput. Math }
, Vol. 5,
No. 3, pp.89.
\bibitem[\protect\citeauthoryear{Shang et al.}{2009}]{cla3}
Shang, X.F., Wu, P. and Shao, X.P.(2009)`An efficient method for solving Emden-Fowler equations', {\it  J. Franklin Inst }
, Vol. 346,
No. 9, pp.889-897.
\bibitem[\protect\citeauthoryear{Tripathi N.}{2019}]{Lege}
Tripathi, N.(2019)`Shifted Legendre Operational Matrix for Solving Fractional Order Lane-Emden Equation', {\it   National Academy Science Letters }
, Vol. 42,
No. 2, pp.139-145.
\bibitem[\protect\citeauthoryear{Wazwaz A.M.}{2005}]{cla4} 
Wazwaz, A.M.(2005)`Adomian decomposition method for a reliable treatment of the Emden-Fowler equation', {\it  Appl. Math}
, Vol. 161,
No. 2, pp.543-560.
\bibitem[\protect\citeauthoryear{Wazwaz Abdul-M.}{2005}]{cla5} 
Wazwaz, A.M.(2005)`Analytical solution for the time-dependent Emden–Fowler type of equations by Adomian decomposition
method', {\it Appl. Math. Comput }
, Vol. 166,
No. 3, pp.638-651.
\bibitem[\protect\citeauthoryear{Yousefi S.A.}{2007}]{cla6} 
Yousefi, S.A.(2007)`Legendre scaling function for solving generalized Emden-Fowler equations', {\it Int. J. Inf. Syst. Sci  }
, Vol. 3,
No. 2, pp.243-250.
\end{thebibliography}



\end{document}